\theoremstyle{plain}
\newtheorem{theorem}{Theorem}[section]
\newtheorem{lemma}[theorem]{Lemma}
\newtheorem{proposition}[theorem]{Proposition}
\newtheorem{corollary}[theorem]{Corollary}
\theoremstyle{definition}
\newtheorem{definition}[theorem]{Definition}
\title[Complete subvarieties in the strata of differentials]{Complete subvarieties in the projectivized strata of meromorphic differentials}
\author{Dawei Chen} 
\address[Dawei Chen]{Department of Mathematics, Boston College, Chestnut Hill, MA 02467, USA}
\email{dawei.chen@bc.edu}
\author{Guillaume Tahar}
\address[Guillaume Tahar]{Beijing Institute of Mathematical Sciences and Applications, Huairou District, Beijing, China}
\email{guillaume.tahar@bimsa.cn}
\date{\today}
\subjclass[2020]{Primary: 32G15; Secondary: 14H10, 14H15}
\keywords{Strata of meromorphic differentials, complete subvariety, saddle connection, polar domain, plurisubharmonic function}
\begin{document}

\begin{abstract}
Here we give an explicit construction of a globally defined strictly plurisubharmonic function on projectivized strata of strictly meromorphic differentials with prescribed orders of zeros and poles. In particular, this yields a flat-geometric proof that these strata do not contain positive-dimensional complete subvarieties.
\end{abstract}

\maketitle

\section{Introduction}

Let $\mu$ be a tuple of integers summing to $2g-2$. Denote by $\Omega\mathcal M_{g}(\mu)$ the stratum of Abelian differentials (i.e., differential one-forms) on smooth, connected, genus-$g$ complex projective curves whose zero and pole orders are prescribed by~$\mu$. Let $\mathbb P\Omega\mathcal{M}_g(\mu) = \Omega\mathcal{M}_g(\mu)/\mathbb C^{*}$ be the corresponding projectivized stratum, parameterizing canonical divisors associated to the differentials of type $\mu$. 

Abelian differentials induce translation structures on the underlying Riemann surfaces, with conical singularities at the zeros and poles. Affine transformations acting on them preserve the orders of these singularities. This viewpoint makes the study of strata of differentials an important topic in surface dynamics and moduli theory. We refer the reader to \cite{Zo06, Wr15, Ch17, CY26} for an introduction to this fascinating subject. 

Among the many aspects of strata of differentials, it is a natural and meaningful question to study how they appear from the perspective of affine geometry---for instance, whether a given stratum can contain a positive-dimensional complete algebraic subvariety. We remark that the analogous question of bounding the dimension of complete subvarieties in the moduli space of curves $\mathcal M_g$ has drawn considerable attention; see \cite{Di84, Lo95, Za99}. Despite these efforts, we still do not know whether $\mathcal M_4$ contains a complete algebraic surface. 

For a holomorphic signature~$\mu$, that is, when all entries of~$\mu$ are nonnegative, Gendron showed that the unprojectivized stratum $\Omega\mathcal M_{g}(\mu)$ of holomorphic differentials contains no positive-dimensional complete subvariety by applying the maximum modulus principle to shortest saddle connections; see~\cite{G20}. In this case, an alternative proof was later given by the first-named author, using the positivity of certain divisor classes on the moduli space of curves; see~\cite{C23}. 

When $\mu$ is a signature of strictly meromorphic differentials, again by exploiting the positivity of divisor classes, the first-named author proved that both the projectivized and unprojectivized strata, $\mathbb P\Omega\mathcal M_{g}(\mu)$ and $\Omega\mathcal M_{g}(\mu)$, of strictly meromorphic differentials contain no positive-dimensional complete subvarieties; see~\cite{C19, C24}. However, no alternative proof using flat geometry was known before. 

In this article, we provide a {\em flat-geometric} proof of the fact that the projectivized strata of strictly meromorphic differentials contain no positive-dimensional complete subvarieties. To this end, we combine a length function $\ell^{-2}$ introduced by Mondello in \cite{Mondello}, which measures the total length of a basis of short saddle connections, with a new function---the \textit{polar size function} $\mathcal{S}$---to construct a globally defined strictly plurisubharmonic function on these strata. 

\begin{definition}\label{defn:Mondello}
Given a translation surface $(X,\omega)$, let $B$ be a basis of the (relative) homology of $X$ punctured at the poles and taken relatively to the zeros of $\omega$, consisting of saddle connections. Define $$\ell^{-2}_{B}\coloneqq\sum\limits_{\gamma \in B} \left| P_{\gamma} \right|^{-2}$$
where $P_{\gamma}$ is the period of $\omega$ along an oriented saddle connection $\gamma$.
\par
On an unprojectivized stratum $\Omega\mathcal M_{g}(\mu)$, the function $\ell^{-2}$ is defined to be the supremum of the functions $\ell^{-2}_{B}$ over all bases $B$ formed by saddle connections.
\end{definition}

A tempting approach to obtaining a global function on the projectivized strata would be to normalize the function $\ell^{-2}$ by the area of the translation surface. However, not only do the flat structures induced by strictly meromorphic differentials have infinite area, but even for holomorphic differentials the area does not vary pluriharmonically with the period coordinates on the strata. This is why we introduce a new function instead. 
\par
In the flat structure induced by a strictly meromorphic differential, there exists a canonical family of saddle connections that decomposes the surface into the core and the polar domains (see Section~\ref{sub:CorePolarDomain} for details). We use these to construct the polar size function as follows. 

\begin{definition}\label{defn:PSF}
Let $(X,\omega)$ be a translation surface with $p \geq 1$ labeled poles. For each $i$, let $D_{i}$ be the set of boundary edges of the polar domain of the $i$th pole (oriented counterclockwise). Let $D_{\omega} = \bigsqcup\limits_{i=1}^{p} D_{i}$. In particular, the same saddle connection can appear twice in $D_{\omega}$, with possibly opposite orientations, if it bounds two polar domains (possibly the same). Elements of $D_{\omega}$ are \textit{oriented saddle connections}.
\par
The \textit{polar size function} is defined to be
$$
\mathcal{S}(\omega)\coloneqq \max\limits_{E \subset D_{\omega}} \left|\sum\limits_{\gamma \in E} P_{\gamma} \right|.
$$
\end{definition}

Combining $\ell^{-2}$ and $\mathcal{S}$, we obtain a function that is defined on the projectivized strata of meromorphic differentials. 

\begin{theorem}\label{theorem:MAIN}
For any projectivized stratum $\mathbb{P}\Omega\mathcal M_g(\mu)$ of strictly meromorphic differentials, the function $2\log \mathcal{S} + \log \ell^{-2}$ is strictly plurisubharmonic on $\mathbb{P}\Omega\mathcal M_g(\mu)$. 
\end{theorem}

From the above theorem, we deduce the existence of a non-constant plurisubharmonic function on any positive-dimensional subvariety of a projectivized stratum of meromorphic differentials. The maximum principle then implies that such subvarieties cannot be complete. 

\begin{corollary}\label{cor:NoPositive}
There is no positive-dimensional complete subvariety in any stratum $\mathbb{P}\Omega\mathcal M_g(\mu)$ of strictly meromorphic differentials. 
\end{corollary}

Besides Abelian differentials, one can also study strata of $k$-differentials with prescribed zero and pole orders, where a $k$-differential is a section of the $k$th power of the canonical bundle. Note that a $k$-differential with a pole of order at least $k$ (i.e., when the corresponding $(1/k)$-translation surface structure has infinite area) can be lifted via the canonical cyclic covering construction to the $k$th power of a strictly meromorphic differential one-form; see~\cite[Section 2]{BCGGM-k}. Corollary~\ref{cor:NoPositive} therefore implies the following.  

\begin{corollary}\label{cor:k-diff}
    For any stratum $\Omega^k\mathcal{M}_{g}(\mu)$ of $k$-differentials with a pole of order at least $k$, the projectivized stratum $\mathbb P\Omega^k\mathcal M_{g}(\mu)$ contains no positive-dimensional complete subvarieties.  
\end{corollary}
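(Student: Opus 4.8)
We deduce the corollary from Theorem~\ref{thm:MAIN} by means of the canonical cyclic covering construction, so the plan is to reduce one statement to the other. Given $(X,\xi)$ in a stratum $\Omega^k\mathcal M_g(\mu)$ of $k$-differentials whose pole orders are all at most $k-1$, let $\pi\colon \widehat X\to X$ be the canonical degree-$k$ cover on which $\pi^*\xi$ acquires a $k$-th root $\omega$, an Abelian differential. The local models $w^k=z^{a_i}$ at the marked points pin down the ramification of $\pi$, hence the genus $\widehat g$ by Riemann--Hurwitz and the zero profile $\widehat\mu$ of $\omega$; the hypothesis that all pole orders are $\le k-1$ is precisely what makes $\omega$ holomorphic (a pole of order $k$ would produce a simple pole of $\omega$, of finite area but not holomorphic), so $(\widehat X,\omega)$ lies in $\Omega\mathcal M_{\widehat g}(\widehat\mu)$. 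Rescaling $\xi\mapsto t\xi$ only multiplies $\omega$ by the nonzero scalar $\zeta\, t^{1/k}$ (for a root of unity $\zeta$; the ambiguity in the $k$-th root is accounted for by the deck group), so on the locus of primitive $k$-differentials the construction descends to a morphism $\Phi\colon \mathbb P\Omega^k\mathcal M_g(\mu)\to \mathbb P\Omega\mathcal M_{\widehat g}(\widehat\mu)$; moreover $\Phi$ has finite fibers, since from $[\omega]$ one recovers $X=\widehat X/(\mathbb Z/k)$, the deck generator being one of the finitely many automorphisms $\sigma$ of $\widehat X$ with $\sigma^*\omega=\zeta\,\omega$, and then $\xi$ up to scale as the descent of $\omega^{\otimes k}$.

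Suppose now that $Z\subseteq\mathbb P\Omega^k\mathcal M_g(\mu)$ is an irreducible positive-dimensional complete subvariety; being complete in a quasi-projective variety, $Z$ is projective, and it is enough to produce a complete curve inside $\mathbb P\Omega\mathcal M_{\widehat g}(\widehat\mu)$, since Theorem~\ref{thm:MAIN} already forbids complete curves there. If the generic point of $Z$ is a primitive $k$-differential, then by slicing with general hyperplane sections one finds a complete curve $B\subseteq Z$ contained in the primitive locus, where $\Phi$ is a morphism, and $\Phi(B)$ is a complete curve---not a point, as $\Phi|_B$ has finite fibers---contradicting Theorem~\ref{thm:MAIN}. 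If instead $Z$ lies entirely in the non-primitive locus, then generically on $Z$ one has $\xi=\eta^{\otimes(k/k_0)}$ for a primitive $k_0$-differential $\eta$ with $k_0\mid k$ and $k_0<k$, and this locus is the image of the finite $(k/k_0)$-th power map $\mathbb P\Omega^{k_0}\mathcal M_g(\mu_0)\to\mathbb P\Omega^{k}\mathcal M_g(\mu)$; pulling $Z$ back along it produces a complete positive-dimensional subvariety of a stratum of $k_0$-differentials, and one finishes by induction on $k$, the base case $k_0=1$ being a holomorphic Abelian stratum handled by Theorem~\ref{thm:MAIN} directly.

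The delicate point of this reduction is the treatment of non-primitive differentials: the canonical cover of a non-primitive $k$-differential is disconnected, so $\Phi$ is only a rational map on the whole stratum, and one must check that it is a genuine morphism along the complete curves one uses (with the residual case of a generically-primitive complete curve meeting the non-primitive locus handled by pushing $\Phi$ to the multiscale boundary) and that the non-primitive loci are indeed images of power maps from lower-order strata. The real difficulty, however, is Theorem~\ref{thm:MAIN} itself, which I would approach as a maximum-principle argument extending Gendron's treatment of the unprojectivized case. It suffices to exclude a non-constant morphism from a smooth complete curve $B$ to $\mathbb P\Omega\mathcal M_g(\mu)$; pulling back the tautological $\mathbb C^*$-bundle gives a genuine family of unit-area translation surfaces over $B$, which---$B$ being complete, hence disjoint from the boundary---stays in a compact part of the stratum. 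Locally on the projectivized stratum the ratios of period coordinates are holomorphic functions and hence constant on the compact $B$; the only way all of them could fail to be globally constant is for some ratio of saddle-connection or cylinder-core periods to degenerate along $B$, that is, for the flat structure to develop a part arbitrarily thin relative to the rest while remaining inside the stratum. Showing that such a degeneration would force a flat cylinder of unbounded modulus---which is impossible when the geometry is uniformly controlled, since a cylinder of modulus $m$ and circumference $c$ has area $mc^2$ with $c$ bounded below and the area bounded above---or else an approach to the boundary of the stratum, excluded by completeness, is the crux of the matter, and is where, as the abstract signals, the geometry of flat cylinders does the work. Granting this, all period ratios are constant, so $B$ maps to a point, a contradiction.
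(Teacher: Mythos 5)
Your reduction is exactly the paper's route: the paper proves this corollary in one stroke by invoking the canonical cyclic covering construction of \cite[Section 2]{BCGGM-k} (pole orders at most $k-1$ guaranteeing that the $k$-th root $\omega$ on the cover is holomorphic, as you verify) and then applying Theorem~\ref{thm:MAIN} to the resulting stratum of Abelian differentials; citing Theorem~\ref{thm:MAIN} is of course legitimate here, so your closing sketch of its proof is not needed for the corollary (and, as a sketch, it is too coarse: locally defined holomorphic period ratios are not automatically constant on a compact $B$ because they are only local and multivalued, which is precisely why the paper works with globally defined pluriharmonic/plurisubharmonic quantities and the maximum principle).

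The one step in your reduction that does not work as stated is the treatment of a generically primitive $Z$: slicing by general hyperplanes cannot produce a complete curve avoiding the non-primitive locus when that locus has codimension one in $Z$, and when $\dim Z=1$ there is nothing to slice; the fallback you gesture at (extending $\Phi$ over such points via the multiscale boundary) is nontrivial and not carried out. Fortunately the case you are worried about never occurs: the obstruction to extracting a $k$-th root of $\xi$ is the holonomy character $\pi_1(X\setminus Z)\to\mathbb Z/k$ of the flat metric, a discrete invariant that varies continuously, hence is locally constant on $\Omega^k\mathcal M_g(\mu)$. Consequently the primitivity index is constant on connected subsets, so each irreducible complete subvariety lies either entirely in the primitive locus (where $\Phi$ is a genuine finite morphism and Theorem~\ref{thm:MAIN} applies to its image) or entirely in a power locus, which your induction on $k$ via the finite power maps handles correctly. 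With that substitution for the slicing argument, your proof is a fleshed-out version of the paper's.
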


We remark that since $\mathbb{P}\Omega\mathcal{M}_{g}(\mu)$ is a quasi-projective variety, its complete algebraic subvarieties and complex analytic subvarieties coincide by the GAGA principle. Moreover, if a complete subvariety $M$ is singular, we may work with a resolution of singularities of $M$ and pull back the family of differentials accordingly. Alternatively, one can intersect a higher-dimensional complete subvariety with ample hypersurfaces until obtaining a complete algebraic curve. Therefore, Corollary~\ref{cor:NoPositive} is equivalent to showing that $\mathbb{P}\Omega\mathcal{M}_{g}(\mu)$ contains no complete algebraic curves, in which case we may work with the normalization of such a curve, which is smooth. In these senses, even when $M$ is singular, we can still speak of holomorphic local coordinates on $M$. We shall do so without further comment. 

Finally, we point out that the $\psi$ class at each pole (as an $S^1$-bundle modulo $\mathbb R^{+}$) has fiber represented by the boundary of the corresponding polar domain. Therefore, the polar size function we constructed and the related plurisubharmonic property provide an analytic incarnation of the positivity of $\psi$ classes utilized in the algebro-geometric proof of Corollary~\ref{cor:NoPositive} in \cite{C19, C24}. Note that this interplay between $\psi$ classes and the underlying metric geometry played an important role in Kontsevich's and Mirzakhani's proofs of Witten's conjecture, in the contexts of ribbon graphs associated to the flat metric of Strebel differentials and geodesic boundaries associated to the cusps of hyperbolic Riemann surfaces, respectively (see \cite{Ko92, Mi07}). In this sense, we expect that the ideas and techniques in this paper will inspire further developments in related fields. 

\subsection*{Acknowledgements} The research of D.C. was supported in part by the National Science Foundation under grant DMS-2301030 and by Simons Travel Support for Mathematicians. The research of G.T. was supported by the Beijing Natural Science Foundation (IS23005) and by the French National Research Agency through the project TIGerS (ANR-24-CE40-3604). The authors thank Quentin Gendron, Saul Schleimer, and Alex Wright for their interest, valuable remarks, and stimulating discussions. We also thank the referees for carefully reading the paper and providing very helpful comments.

\section{Period coordinates of strata of differentials}

Let $\mu = (m_1, \ldots, m_n)$ be a tuple of integers summing to $2g - 2$. Given $(X, \omega) \in \Omega\mathcal M_g(\mu)$, let $Z$ and $P$ be the sets of zeros and poles of $\omega$ in $X$, respectively. Integrating $\omega$ along the homology classes in the relative homology group $H_1(X\setminus P, Z; \mathbb Z)$ provides local coordinates on $\Omega\mathcal M_g(\mu)$ at $(X,\omega)$, called {\em period coordinates}.   

Note that $\omega$ induces a translation structure on $X \setminus (Z \cup P)$. A zero of order $m_i \ge 1$ corresponds, under the induced flat metric, to a {\em conical singularity} (also called a {\em saddle point}) with cone angle $2\pi (m_i + 1)$. A pole of order $m_j \leq -2$ has a flat-geometric neighborhood formed by gluing $|m_j|-1$ Euclidean planes at infinity. A simple pole of order $-1$ has a flat-geometric neighborhood given by a half-infinite cylinder. The period coordinates defined above describe local deformations of the translation surface structure while preserving the orders of its singularities, thus providing local coordinates for the stratum $\Omega\mathcal M_g(\mu)$. 

A {\em saddle connection} is a geodesic joining two zeros of $\omega$, and it is said to be closed if the two zeros coincide. Since one may choose the homology class representatives to be length-minimizing paths connecting the zeros of~$\omega$, it follows that saddle connections generate the relative homology group, or dually, that their periods generate the local coordinate system on $\Omega\mathcal M_g(\mu)$ near $(X, \omega)$. 

We refer to~\cite{Zo06} for a comprehensive survey of the strata of holomorphic differentials and the induced flat surface structures, and to \cite{Bo15} for an introduction to the strata of meromorphic differentials.

Recall that for a complex analytic space $\mathcal M$, an upper semi-continuous function 
$f\colon \mathcal M\to \mathbb R \cup \{ -\infty \}$ is said to be {\em plurisubharmonic} if for any holomorphic map $\varphi\colon \Delta \to \mathcal M$ the function $f\circ \varphi$ is subharmonic, where $\Delta\subset \mathbb C$ denotes the unit disk. Additionally, $f$ is \emph{strictly plurisubharmonic}
if for every $x\in\mathcal M$ there exist an open neighborhood $U\ni x$ and $\varepsilon>0$
such that for some (equivalently, any) holomorphic embedding $i\colon U\hookrightarrow \mathbb C^N$ the function $f - \varepsilon\cdot \|i(\cdot)\|^2$
is plurisubharmonic on $U$. Intuitively speaking, plurisubharmonicity means the function is ``convex'' in every complex direction. 

The length function $\ell^{-2}$, constructed in terms of lengths of saddle connections in Definition~\ref{defn:Mondello}, has good harmonicity properties. The following was shown by Mondello in \cite[Lemma 3.15]{Mondello}. Although Mondello stated it only for holomorphic differentials, the same proof works for strata of meromorphic differentials. 

\begin{proposition}\label{prop:Mondello}
For any unprojectivized stratum $\Omega\mathcal M_g(\mu)$ of meromorphic differentials, the function $\ell^{-2}$ satisfies the following properties:
\begin{itemize}
\item[(i)] $\ell^{-2}$ is homogeneous of degree $-2$;
\item[(ii)] $\ell^{-2}$ and $\log \ell^{-2}$ are plurisubharmonic on $\Omega\mathcal M_g(\mu)$, and strictly plurisubharmonic when restricted to a codimension-one submanifold transverse to the rays of $\Omega\mathcal M_g(\mu)$.
\end{itemize} 
\end{proposition}

In the above, the {\em homogeneous degree} of a function $f$ on a stratum of differentials being $d$ means 
$f(\lambda \omega) = |\lambda|^{d} f(\omega)$ for all $\lambda\in \mathbb C^{*}$ and all differentials $\omega$ parameterized in the stratum.  

\section{The polar size function}\label{sub:PolarSize}

\subsection{The core, polar domains, and peripheral saddle connections}\label{sub:CorePolarDomain}

In a translation structure induced by a meromorphic differential, every neighborhood of a pole has infinite area. However, the periods of such a differential are entirely captured by a subset, namely the \textit{core}, a finite-area domain cut out by saddle connections. 
\par 
\begin{definition}\label{defn:core}
A subset $E$ of a translation surface $(X,\omega)$ is \textit{convex} if and only if every geodesic segment joining two points of $E$ lies entirely in~$E$.
\par
The \textit{convex hull} of a subset $F$ of a translation surface $(X,\omega)$ is the smallest closed convex subset of $X$ that contains~$F$.
\par
We define the {\em core} of $(X,\omega)$ to be the convex hull of the zeros of $\omega$ in $(X,\omega)$, denoted $\text{core}(X,\omega)$.  
\end{definition}
\par 
The core separates the poles from one another. The following result shows that the complement of the core has as many connected components as the number of poles (see Proposition~4.4 and Lemma~4.5 of \cite{Ta18}).    
\par 
\begin{proposition}\label{prop:decomppoles}
For a translation surface $(X,\omega)$, the boundary of the core is a finite union of saddle connections. Moreover, each connected component of $X \setminus \text{core}(X,\omega)$ is a topological disk that contains a unique pole. 
\end{proposition}

We refer to these connected components as \textit{polar domains}. A saddle connection of $(X,\omega)$ that belongs to the boundary of some polar domain is called a \textit{peripheral saddle connection}. Along a deformation inside a stratum, the appearance or disappearance of a peripheral saddle connection occurs when one crosses the real hypersurface called the {\em discriminant} of the stratum (see Propositions~4.11, 4.14, and 4.15 of \cite{Ta18}). 

\begin{proposition}\label{prop:Discriminant}
A translation surface $(X,\omega)$ belongs to the discriminant $\Delta$ of the stratum if and only if two consecutive peripheral saddle connections with independent relative homology classes meet at an angle of $\pi$.
\par
On the complement of $\Delta$, the decomposition of the surface into the core and polar domains is structurally stable. 
\end{proposition}

\subsection{Peripheral homology classes}

We recall the following notation introduced in Definition~\ref{defn:PSF}. In a translation surface $(X,\omega)$ with $p$ labeled poles, for $1 \leq i \leq p$, let $D_{i}$ be the set of boundary edges of the polar domain of the $i$th pole (oriented counterclockwise), and let $D_{\omega} = \bigsqcup\limits_{i=1}^{p} D_{i}$. Elements of $D_{\omega}$ are oriented peripheral saddle connections.

\begin{definition}
Given a translation surface $(X,\omega)$, a \textit{peripheral homology class} is a class $[\delta]$ of the form $\sum\limits_{\gamma \in E} [\gamma]$ for some $E \subset D_\omega$. We denote by $PH_{\omega}$ the (finite) set of peripheral homology classes of $(X,\omega)$. 
\end{definition}

\begin{lemma}\label{lem:PeripheralDeform}
Given a translation surface $(X,\omega)$ in a stratum $\Omega\mathcal{M}_{g}(\mu)$ of meromorphic differentials, there exists a contractible neighborhood $V$ of $(X,\omega)$ 
such that:
\begin{itemize}
\item the peripheral homology classes in $PH_{\omega}$ are marked throughout $V$;
\item for any $\omega' \in V$, every homology class in $PH_{\omega'}$ is a deformation of a peripheral homology class in $PH_{\omega}$.
\end{itemize} 
\end{lemma}

\begin{proof}
For a sufficiently small neighborhood $V$ of $(X,\omega)$, 
every saddle connection of $D_{\omega}$ persists as a saddle connection when $(X,\omega)$ deforms in $V$. Observe that peripheral saddle connections can fail to remain peripheral only when deforming (concavely) two consecutive peripheral saddle connections that are in the same direction. In this case, the sum of the two original peripheral homology classes gives the homology class of the new peripheral saddle connection. 
\end{proof}

\subsection{Coherent homology classes}

Some of the homology classes defined below have remarkable deformation properties, which will be crucial in our proof that the polar size function introduced in Definition~\ref{defn:PSF} is plurisubharmonic. 

\begin{definition}
Given a translation surface $(X,\omega)$ in a stratum of meromorphic differentials, a \textit{coherent homology class} is a class $[\delta]$ of the form $\sum\limits_{\gamma \in E} [\gamma]$ for some $E \subset D_\omega$ such that there exists $\theta \in \mathbb{R}$ for which $\Re\left(e^{-i\theta} P_{\gamma} \right)>0$ for all $\gamma \in E$ and $\Re\left(e^{-i\theta} P_{\gamma} \right)<0$ for all $\gamma \in D_\omega\setminus E$.
\par
We denote by $Coh_{\omega}$ the (finite) set of coherent homology classes in $(X,\omega)$. 
\end{definition}

Since $D_\omega$ is a finite set, there are only finitely many coherent homology classes in $(X,\omega)$. Additionally, coherent homology classes arise from choosing an angle $\theta$ that differs from the arguments of all $\gamma$ in $D_\omega$. Therefore, the set $Coh_{\omega}$ of coherent homology classes in $(X,\omega)$ is nonempty.   

\begin{lemma}\label{lem:Persistence}
Given a translation surface $(X,\omega)$ in a stratum $\Omega\mathcal{M}_{g}(\mu)$ of meromorphic differentials, every coherent homology class of $(X,\omega)$ persists as a coherent homology class in a neighborhood of $(X,\omega)$ in $\Omega\mathcal{M}_{g}(\mu)$.
\end{lemma}

\begin{proof}
Let $[\delta]=\sum\limits_{\gamma \in E} [\gamma]$ be a coherent homology class of $(X, \omega)$. In particular, there exists $\theta \in \mathbb{R}$ such that for any class $[\gamma] \in E$, $\Re\left(e^{-i\theta} P_{\gamma} \right)>0$.
\par
In the boundary of any polar domain, along an arbitrarily small deformation, a peripheral saddle connection $\gamma$ representing $[\gamma]$ whose incident corners have angle strictly larger than $\pi$ persists as a peripheral saddle connection. Moreover, $\Re\left(e^{-i\theta} P_{\gamma} \right)$ stays positive.
\par
In contrast, given some polar domain, consider a pair of consecutive boundary oriented saddle connections $\gamma_{1},\gamma_{2}$ such that:
\begin{itemize}
    \item $\gamma_{1}$ and $\gamma_{2}$ meet at a boundary corner of angle $\pi$;
    \item the starting point of $\gamma_{1}$ and the ending point of $\gamma_{2}$ are corners of angle strictly larger than $\pi$.
\end{itemize}
We have that $[\gamma_{1}]$ and $[\gamma_{2}]$ belong to $D_\omega$. Assuming that they belong to $E$, $\Re\left(e^{-i\theta} P_{\gamma_{1}} \right)$ and $\Re\left(e^{-i\theta} P_{\gamma_{2}} \right)$ stay positive for any small enough deformation. If along an arbitrarily small deformation, the angle at the corner between $\gamma_{1}$ and $\gamma_{2}$ becomes strictly smaller than $\pi$, then $\gamma_{1}$ and $\gamma_{2}$ cease to be peripheral saddle connections for this polar domain. However, $\gamma_{1}$ and $\gamma_{2}$ form a triangle with a third saddle connection $\gamma'$ satisfying $[\gamma']=[\gamma_{1}]+[\gamma_{2}]$ (the assumption that boundary edges of the same polar domain have counterclockwise orientation is crucial here). This new saddle connection $\gamma'$ is a peripheral saddle connection for the same polar domain and $\Re\left(e^{-i\theta} P_{\gamma'} \right)= \Re\left(e^{-i\theta} P_{\gamma_{1}} \right) +\Re\left(e^{-i\theta} P_{\gamma_{2}} \right)$ stays positive provided that the deformation is small enough. In this case, $[\gamma_{1}]$ and $[\gamma_{2}]$ disappear from $E$ but they are replaced by $[\gamma']$ so that the homology class $[\delta]=\sum\limits_{\gamma \in E} [\gamma]$  does not change.
\par
More generally, consider a chain of consecutive boundary oriented saddle connections $\gamma_{1},\dots,\gamma_{k}$ meeting at corners of angle $\pi$ and such that $[\gamma_{1}],\dots,[\gamma_{k}]$ belong to $E$. Under an arbitrarily small deformation, some of these saddle connections may cease to be peripheral. However, they are replaced by newly created peripheral saddle connections in such a way that the homology class $[\delta]=\sum\limits_{\gamma \in E} [\gamma]$ remains unchanged, see Figure~\ref{Figure:Deformation}.
\par
In the specific case where the chain of consecutive saddle connections is cyclic, it forms the boundary of a polar domain consisting of a semi-infinite translation cylinder foliated by a family of parallel closed geodesics. Denoting by $\alpha_{1}(\varepsilon),\dots,\alpha_{k}(\varepsilon)$ the sequence of corner angles along a deformation parametrized by $\varepsilon$, the total angle defect
$
\sum\limits_{i=1}^{k}\bigl(\pi-\alpha_i(\varepsilon)\bigr)
$
can be interpreted as the winding number of a loop homotopic to a waist curve of the cylinder, and is therefore identically equal to $0$ throughout the deformation. Consequently, the corner angles cannot all become strictly smaller than $\pi$, so the previous arguments still apply, and the boundary of the polar domain continues to be formed by peripheral saddle connections whose homology classes belong to $E$.
 In all cases,
$
[\delta]=\sum_{\gamma\in E}[\gamma]
$
persists as a coherent homology class under sufficiently small deformations.
\end{proof}

\begin{figure}
\includegraphics[scale=0.3]{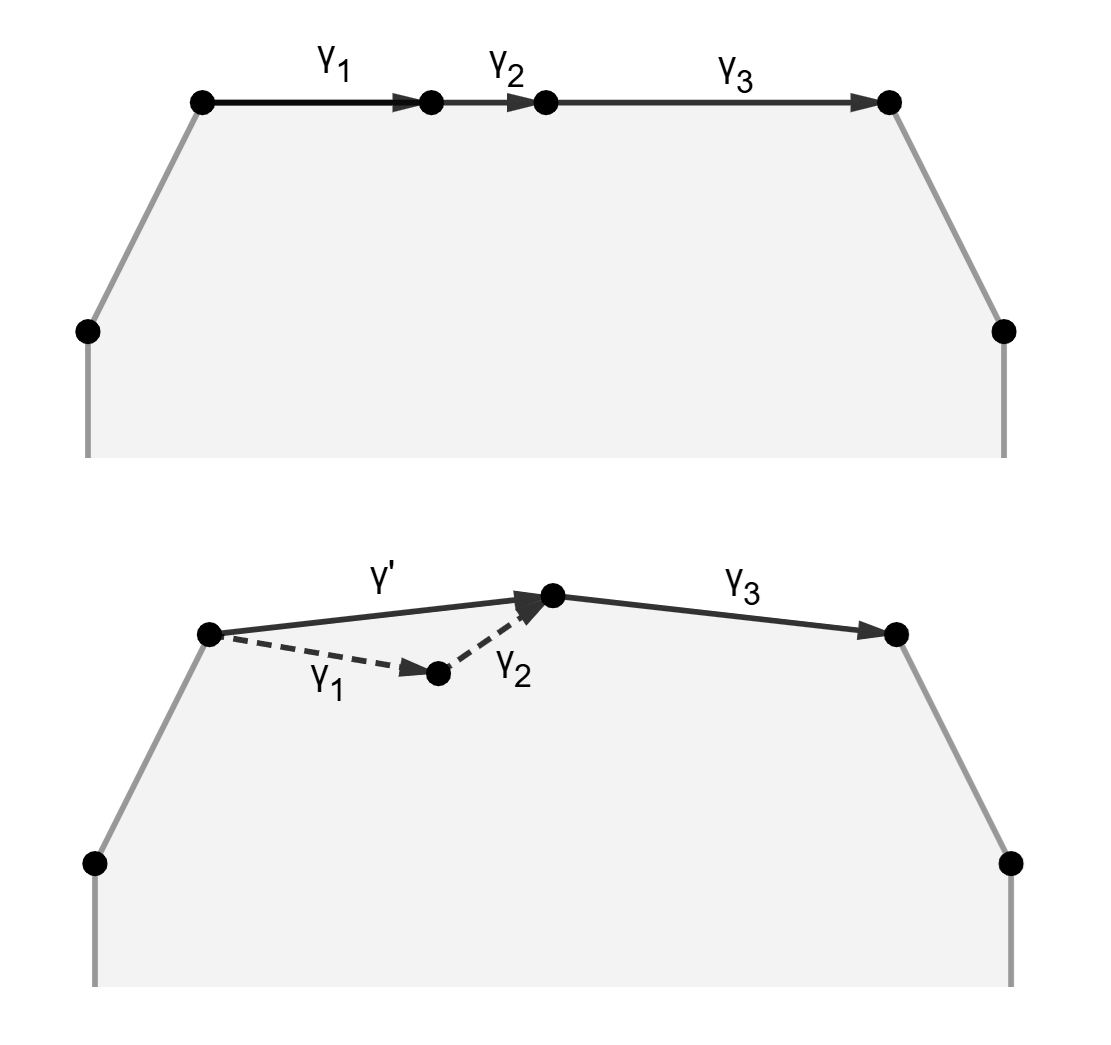}
\caption{Deformation of a chain of peripheral saddle connections}\label{Figure:Deformation}
\end{figure}

\subsection{Properties of the polar size function}

To show that the polar size function $\mathcal{S}$ introduced in Definition~\ref{defn:PSF} is plurisubharmonic, the key observation is that only coherent classes appear in the maximization. 

\begin{lemma}\label{lem:NotMaximizer}
Given a translation surface $(X,\omega)$ in a stratum of meromorphic differentials, for any subset $E \subset D_{\omega}$, if $\left|\sum\limits_{\gamma \in E} P_{\gamma} \right|= \mathcal{S}(\omega)$, then $\sum\limits_{\gamma \in E} [\gamma]$ is coherent. 
\end{lemma}

\begin{proof}
Since $D_\omega$ is nonempty, we have $\mathcal{S}(\omega)>0$. Given a subset $E$ such that $\left|\sum\limits_{\gamma \in E} P_{\gamma} \right|= \mathcal{S}(\omega)$, we denote by $\theta$ the argument of $\sum\limits_{\gamma \in E} P_{\gamma}$. Then $\mathcal{S}(\omega)=\Re\left(e^{-i\theta}\sum\limits_{\gamma \in E} P_{\gamma} \right)$. Up to replacing $\omega$ by $e^{-i\theta}\omega$, we assume that $\sum\limits_{\gamma \in E} P_{\gamma}$ is a positive real number and coincides with $\mathcal{S}(\omega)$.

If there is an element $\gamma$ of $E$ such that $\Re(P_{\gamma})<0$, we can remove it from $E$ to increase $\Re\left(\sum\limits_{\gamma \in E} P_{\gamma} \right)$ and obtain a modulus bigger than $\mathcal{S}(\omega)$, which leads to a contradiction. For the same reason, there cannot be any element $\gamma$ of $D_{\omega} \setminus E$ such that $\Re(P_{\gamma})>0$. Therefore, $\sum\limits_{\gamma \in E} [\gamma]$ can fail to be coherent only if there exists a nonempty subset $F \subset D_{\omega}$ consisting of those $\gamma$ for which $\Re\left(P_{\gamma}\right)=0$. 
Since the elements in $F$ are purely imaginary, adding or removing an element of $F$ to $E$ yields a sum of periods with larger modulus than $\mathcal{S}(\omega)$, again a contradiction. Thus the homology class $\sum\limits_{\gamma \in E} [\gamma]$ is coherent. 
\end{proof}

\begin{proposition}\label{prop:PSFLocal}
Given a translation surface $(X,\omega)$ in a stratum $\Omega\mathcal M_g(\mu)$ of meromorphic differentials, there exists a neighborhood $V$ of $(X,\omega)$ such that, for all $(X', \omega')$ in $V$, the function $\mathcal{S}(\omega')$ is given by the maximum of functions of the form $\left|\sum\limits_{\gamma \in E} P_{\gamma} \right|$, where $\sum\limits_{\gamma \in E} [\gamma]$ is a coherent homology class of $(X,\omega)$ that extends to the entire $V$. 
\end{proposition}

\begin{proof}
We take a sufficiently small contractible neighborhood $V$ of $(X,\omega)$ such that the markings of the (finitely many) peripheral homology classes of $PH_{\omega}$ extend consistently in $V$ (although some of these homology classes may fail to stay peripheral when $(X,\omega)$ deforms in $V$, see Lemma~\ref{lem:PeripheralDeform}). Up to shrinking $V$, we can assume that every coherent class $\delta \in Coh_{\omega}$ extends to $V$ as a coherent class (Lemma~\ref{lem:Persistence}). Moreover, by finiteness of $PH_{\omega}$, there exists an $\epsilon>0$ such that for every class $\delta' \in PH_{\omega} \setminus Coh_{\omega}$, $|\int_{\delta'} \omega'|<\mathcal{S}(\omega)-\epsilon$ (Lemma~\ref{lem:NotMaximizer}).
\par
Let $\delta \in Coh_{\omega}$ such that $|\int_{\delta} \omega|=\mathcal{S}(\omega)$. Up to further shrinking $V$, we can assume that for all $(X',\omega')\in V$, $|\int_{\delta} \omega'|>\mathcal{S}(\omega)-\epsilon$, and therefore, $\mathcal{S}(\omega')$ is realized by a coherent homology class in $Coh_{\omega}$ for all $(X',\omega') \in V$. 
\end{proof}

We can thus deduce that $\log \mathcal{S}(\omega)$ is plurisubharmonic. 

\begin{proposition}\label{lem:functionS}
The function $\mathcal{S}$ has the following properties:
\begin{itemize}
\item[(i)] $\mathcal{S}$ is homogeneous of degree $1$ on $\Omega\mathcal M_g(\mu)$;
\item[(ii)] $\log \mathcal{S}$ is plurisubharmonic on $\Omega\mathcal M_g(\mu)$.
\end{itemize} 
\end{proposition}

\begin{proof}
Claim (i) immediately follows from the definition. Claim (ii) follows from Proposition~\ref{prop:PSFLocal}, since the period of a (coherent) homology class is holomorphic, $\log |\cdot|$ is plurisubharmonic, and the pointwise maximum of finitely many plurisubharmonic functions remains plurisubharmonic. 
\end{proof}

\section{Proof of the main result}\label{sec:Main}

Combining Proposition~\ref{prop:Mondello} and the results of Section~\ref{sub:PolarSize}, we will show that $2\log \mathcal{S} + \log \ell^{-2}$ is a strictly plurisubharmonic function on any projectivized stratum $\mathbb{P}\Omega\mathcal M_g(\mu)$ of meromorphic differentials. 

\begin{proof}[Proof of Theorem~\ref{theorem:MAIN}]
First observe that the degrees of homogeneity of $\mathcal{S}$ and $\ell^{-2}$ are respectively $1$ and $-2$. Therefore, $2\log \mathcal{S} + \log \ell^{-2}$ is well-defined on $\mathbb{P}\Omega\mathcal M_g(\mu)$.
\par
It remains to prove that $2\log \mathcal{S} + \log \ell^{-2}$ is strictly plurisubharmonic. This follows from the following two statements:
\begin{itemize}
\item $\log \ell^{-2}$ is strictly plurisubharmonic when restricted to a codimension-one submanifold transverse to the rays of $\Omega\mathcal M_g(\mu)$ (see Proposition~\ref{prop:Mondello});
\item $\log \mathcal{S}$ is plurisubharmonic on $\Omega\mathcal M_g(\mu)$ (see Proposition~\ref{lem:functionS}).
\end{itemize} 
\end{proof}

Finally, we conclude that there is no positive-dimensional complete subvariety in a projectivized stratum $\mathbb{P}\Omega\mathcal{M}_{g}(\mu)$ of strictly meromorphic differentials. 

\begin{proof}[Proof of Corollary~\ref{cor:NoPositive}]
Since the function $2\log \mathcal{S} + \log \ell^{-2}$ is strictly plurisubharmonic, its restriction to any positive-dimensional complete subvariety of $\mathbb{P}\Omega\mathcal{M}_{g}(\mu)$ is nonconstant. However, by the maximum principle, there is no nonconstant plurisubharmonic function on a positive-dimensional complete variety. We thus obtain the desired claim. 
\end{proof}

\printbibliography
\end{document}